\documentclass{amsart}
\usepackage{latexsym}
\def\qed{\hfill $\Box$}
\usepackage{amsfonts,amssymb,amsmath}
\usepackage{amsthm}
\usepackage{graphicx,color}
\usepackage{amsmath} 
\usepackage{multirow,bigdelim}
\usepackage{cleveref}
\newtheorem{theorem}{Theorem}	
\crefname{theorem}{Theorem}{Theorems}
\newtheorem{lemma}{Lemma}
\newtheorem{corollary}{Corollary}		
\newtheorem{proposition}{Proposition}	
\crefname{proposition}{Proposition}{Propositions}

\newtheorem{definition}{Definition}

\newtheorem{example}{Example}
\newtheorem{remark}{Remark}

\crefname{section}{Section}{Sections}
\crefname{theorem}{Theorem}{Theorems}
\crefname{lemma}{Lemma}{Lemmas}
\crefname{corollary}{Corollary}	{Corollaries}			
\crefname{proposition}{Proposition}{Propositions}	
\crefname{claim}{Claim}{Claims}
\crefname{conjecture}{Conjecture}{Conjectures}			
\crefname{definition}{Definition}{Definitions}
\crefname{problem}{Problem}{Problems}
\crefname{example}{Example}{Examples}
\crefname{remark}{Remark}{Remarks}
\crefname{figure}{Figure}{Figures}

\newfont{\bg}{cmr9 scaled\magstep2}
\newcommand{\bigzerol}{\smash{\lower1.0ex\hbox{\bg 0}}}

\newcommand{\R}{\mathbb{R}}

\makeatletter
\newcommand{\tpitchfork}{%
  \vbox{
    \baselineskip\z@skip
    \lineskip-.52ex
    \lineskiplimit\maxdimen
    \m@th
    \ialign{##\crcr\hidewidth\smash{$-$}\hidewidth\crcr$\pitchfork$\crcr}
  }%
}

\makeatother
\title[Generic distance-squared mappings on plane curves]
{
Generic distance-squared mappings 
\\
on plane curves
}
\author{Shunsuke Ichiki
}
\thanks{Research Fellow DC1 of Japan Society for the Promotion of Science}
\address{
Graduate School of Environment and Information Sciences,  
Yokohama National University, 
Yokohama 240-8501, Japan}
\email{ichiki-shunsuke-jb@ynu.jp}

\begin{document}
\date{}
\begin{abstract}
A distance-squared function is one of the most significant functions 
in the application of singularity theory to differential geometry.
Moreover, distance-squared mappings are naturally extended mappings of distance-squared functions, wherein each component is a distance-squared function. 
In this paper, 
compositions of a given plane curve and generic distance-squared mappings 
on the plane into the plane 
are investigated from the viewpoint of stability. 
\end{abstract}
\subjclass[2010]{57R45,57R35} 
\keywords{distance-squared mapping, stability} 
\maketitle
\noindent
\section{Introduction}\label{sec:intro}
Throughout this paper, let $\ell$ and $n$ stand for positive integers. 
In this paper, unless otherwise stated, all manifolds and mappings belong to class $C^{\infty}$ and all manifolds are without boundary. 
Let $p$ be a given point in $\mathbb{R}^n$. 
The mapping $d_p:\mathbb{R}^n\to \mathbb{R}$ defined by 
\[
d_p(x)=\sum_{i=1}^n(x_i-p_i)^2
\]
is called a distance-squared function, 
where $x=(x_1,\ldots ,x_n)$ and $p=(p_1,\ldots ,p_n)$. 
In \cite{D}, the following notion is investigated. 
\begin{definition}\label{def:D}
{\rm Let $p_1,\ldots,p_\ell$ be $\ell$ given points in $\mathbb{R}^n$. 
Set $p=(p_1,\ldots ,p_\ell)\in (\mathbb{R}^n)^\ell$. 
The mapping $D_p : \mathbb{R}^n \rightarrow \mathbb{R}^\ell$ 
defined by 
\[
D_p=(d_{p_1},\ldots , d_{p_\ell})
\]
is called a {\it distance-squared mapping}}.   
\end{definition}
We have the following important motivation for investigating distance-squared mappings. 
Height functions and distance-squared functions have been investigated 
in detail so far, and they are useful tools  
in the applications of singularity theory to differential geometry (see \cite{CS}). 
A mapping in which each component is a height function is nothing but a projection. 
Projections as well as height functions or distance-squared functions have been 
investigated so far. 
For example, in \cite{GP} (resp., \cite{brucekirk}), 
compositions of generic projections and embeddings (resp., stable mappings) 
are investigated from the viewpoint of stability. 
On the other hand, 
a mapping in which each component 
is a distance-squared function is a distance-squared mapping. 
Therefore, it is natural to investigate distance-squared mappings 
as well as projections. 

A mapping $f : \mathbb{R}^n\rightarrow \mathbb{R}^\ell$ is said to be {\it $\mathcal{A}$-equivalent} to a mapping $g : \mathbb{R}^n\rightarrow \mathbb{R}^\ell$ 
if there exist diffeomorphisms $\varphi : \mathbb{R}^n\rightarrow \mathbb{R}^n$ and $\psi : \mathbb{R}^\ell\rightarrow \mathbb{R}^\ell$ 
such that $\psi \circ f\circ \varphi^{-1} =g$. 
$L$ points $p_1,\ldots,p_\ell\in  \R^n$ $(1\leq \ell\leq n+1)$ are said to be {\it in general position}  if $\ell=1$ or 
$\overrightarrow{p_1p_2},\ldots,\overrightarrow{p_1p_\ell}$ $(2\leq \ell\leq n+1)$ are linearly independent.  

In \cite{D}, 
a characterization of distance-squared mappings is given as follows:  
\begin{proposition}[\cite{D}]\label{property}
\begin{enumerate}
\item [$(1)$]
Let $\ell$,$n$ be integers such that $2\leq \ell\leq n$, and let $p_1,\ldots,p_\ell\in \mathbb{R}^n$ be in general position. Then, $D_p : \mathbb{R}^n\rightarrow \mathbb{R}^\ell$ is $\mathcal{A}$-equivalent 
to $(x_1,\ldots,x_n)\mapsto (x_1,\ldots,x_{\ell-1}, x^2_\ell+\cdots +x^2_n)$.
\item [$(2)$]
Let $\ell$,$n$ be integers such that $1\leq n<\ell $, and let $p_1,\ldots,p_{n+1}\in \mathbb{R}^n$ be in general position. Then, $D_p : \mathbb{R}^n\rightarrow \mathbb{R}^\ell$ is $\mathcal{A}$-equivalent to the inclusion $(x_1,\ldots,x_n)\mapsto (x_1,\ldots,x_n,0,\ldots,0)$.
\end{enumerate}
\end{proposition}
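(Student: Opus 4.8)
The plan is to exploit the single structural feature of distance-squared functions: writing $p_i=(p_{i1},\dots,p_{in})$, each component expands as $d_{p_i}(x)=|x|^2-2\langle p_i,x\rangle+|p_i|^2$, so that the \emph{quadratic part $|x|^2$ is common to all components}. Hence every difference $d_{p_1}-d_{p_i}$ is an affine function whose linear part is $2\langle p_i-p_1,\cdot\rangle=2\langle\overrightarrow{p_1p_i},\cdot\rangle$. Since all the operations I use --- translating the source, applying a linear isomorphism or a translation on the target, and replacing target coordinates by suitable polynomial ``shears'' --- are diffeomorphisms, they change $D_p$ only within its $\mathcal A$-class. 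As a first reduction I would translate the source by $x\mapsto x+p_1$, a source diffeomorphism, so that we may assume $p_1=0$; general position then says exactly that $p_2,\dots,p_\ell$ (resp.\ $p_2,\dots,p_{n+1}$) are linearly independent, and $d_{p_1}(x)=|x|^2$, $d_{p_1}-d_{p_i}=2\langle p_i,x\rangle-|p_i|^2$.

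For part $(1)$ ($2\le\ell\le n$) I would post-compose with the linear isomorphism $\Psi(y_1,\dots,y_\ell)=(y_1-y_2,\dots,y_1-y_\ell,y_1)$ of $\mathbb R^\ell$, turning $D_p$ into $(2\langle p_2,x\rangle-|p_2|^2,\dots,2\langle p_\ell,x\rangle-|p_\ell|^2,|x|^2)$, and absorb the constants $-|p_i|^2$ by a further target translation. Next, choosing via Gram--Schmidt an orthonormal basis $u_1,\dots,u_n$ of $\mathbb R^n$ with $u_1,\dots,u_{\ell-1}$ spanning $\mathrm{span}(p_2,\dots,p_\ell)$ and passing to the orthonormal source coordinates $y_j=\langle u_j,x\rangle$ keeps the last component equal to $y_1^2+\cdots+y_n^2$ while making each $2\langle p_i,x\rangle$ a linear form in $y_1,\dots,y_{\ell-1}$ alone; these $\ell-1$ forms are linearly independent, so a linear isomorphism acting on the first $\ell-1$ target coordinates normalizes them to $y_1,\dots,y_{\ell-1}$. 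Finally the target shear $(z_1,\dots,z_\ell)\mapsto(z_1,\dots,z_{\ell-1},z_\ell-z_1^2-\cdots-z_{\ell-1}^2)$ subtracts the redundant squares and yields $(y_1,\dots,y_{\ell-1},y_\ell^2+\cdots+y_n^2)$, the desired normal form.

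For part $(2)$ ($n<\ell$) I would proceed similarly but use the basis property of $p_2,\dots,p_{n+1}$. After a target linear isomorphism that records the first $n$ differences $A_i:=d_{p_1}-d_{p_{i+1}}=2\langle p_{i+1},x\rangle-|p_{i+1}|^2$ ($i=1,\dots,n$), the value $d_{p_1}=|x|^2$, and the remaining differences $d_{p_1}-d_{p_j}$ ($j\ge n+2$), I would use the affine isomorphism $x\mapsto(A_1(x),\dots,A_n(x))$ as a source coordinate change $x'$, which turns the first $n$ components into $x'_1,\dots,x'_n$. Because the linear part of every $d_{p_1}-d_{p_j}$ is a linear combination of those of $A_1,\dots,A_n$, all the leftover components become functions of $x'_1,\dots,x'_n$ only: a quadratic $Q(x')$ coming from $|x|^2$, and affine functions $\beta_j(x')$. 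A single target shear subtracting $Q$ and the $\beta_j$ from the corresponding coordinates then sends these components to $0$, producing the inclusion $(x'_1,\dots,x'_n,0,\dots,0)$.

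I expect the only genuinely delicate points to be bookkeeping rather than analysis: verifying that each constructed target map is a diffeomorphism (the shears are triangular with polynomial inverses, hence diffeomorphisms of $\mathbb R^\ell$) and, in part $(1)$, insisting that the source coordinate change be \emph{orthonormal} so that $|x|^2$ stays a sum of squares --- a general affine change would introduce cross terms that no target shear in the last coordinate could remove. The crucial and essentially only nontrivial idea is the cancellation of the common quadratic term under differences, which converts the problem into linear algebra governed precisely by the general-position hypothesis.
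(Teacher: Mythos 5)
Your proposal is correct, and every step you use is a genuine $\mathcal{A}$-equivalence: the source translation making $p_1=0$; the target isomorphism $(y_1,\ldots,y_\ell)\mapsto(y_1-y_2,\ldots,y_1-y_\ell,y_1)$, which exploits the cancellation of the common quadratic term $|x|^2$ in differences; the orthonormal source change (needed, exactly as you flag, so that $|x|^2$ remains $y_1^2+\cdots+y_n^2$ with no cross terms); the linear normalization of the $\ell-1$ independent linear forms supplied by general position; and the triangular polynomial shears, whose inverses are again polynomial, hence diffeomorphisms of the target. The same checks go through in part $(2)$: since $p_2,\ldots,p_{n+1}$ is a basis of $\R^n$ after the translation, $x\mapsto(A_1(x),\ldots,A_n(x))$ is an affine isomorphism of the source, and because the linear part of every remaining component $d_{p_1}-d_{p_j}$ ($j\geq n+2$) lies in the span of the linear parts of the $A_i$, all leftover components become functions of the new coordinates $x_1',\ldots,x_n'$ alone and can be sheared to zero.

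One caveat about the comparison you were asked to make: this paper contains no proof of \cref{property} at all; the proposition is quoted from \cite{D}, so there is no in-paper argument to measure yours against. What you have written is a complete, self-contained, elementary proof, and it rests on the same fundamental mechanism as the original one in \cite{D}: differences of distance-squared functions are affine, so the general-position hypothesis converts the whole problem into linear algebra, finished off by polynomial shears on the target.
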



In the following, by $N$, we denote a manifold of dimension 1. 
A mapping $f:N\to \R^2$ is called 
a {\it mapping with normal crossings} if the mapping $f$ satisfies the following conditions.  
\begin{enumerate}
\item
For any $y\in \R^2$, $|f^{-1}(y)|\leq 2$, where $|A|$ is the number of its elements of the set $A$. 
\item 
For any two different points $q_1, q_2\in N$ 
satisfying $f(q_1)=f(q_2)$, we have 
$
\dim \left( df_{q_1}(T_{q_1}N)+df_{q_2}(T_{q_2}N)\right)=2$.
\end{enumerate}


From Corollary 8 in \cite{C}, we have the following.  
\begin{proposition}[\cite{C}]\label{semimain}
Let $\gamma: N\to \R^2$ be an injective immersion, where $N$ 
is a manifold of dimension $1$.  
 Then, the following set 
\begin{eqnarray*}
\left \{ p\in \R^2\times \R^2 \mid D_{p}\circ \gamma : N\to \R^2 {\mbox { is an immersion with normal crossings}} \right \}
\end{eqnarray*}
is dense in $\R^2 \times \R^2$. 
\end{proposition}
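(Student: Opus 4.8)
The plan is to express ``immersion with normal crossings'' as a short list of transversality conditions and then to secure each of them for a residual (hence dense) set of $p$ by means of the parametric transversality theorem of Thom. Writing $p=(p_1,p_2)$ and $f_p=D_p\circ\gamma$, the three conditions I must produce are: (i) $f_p$ is an immersion; (ii) every double point of $f_p$ is a normal crossing; and (iii) $f_p$ has no triple points, so that $|f_p^{-1}(y)|\le 2$ for every $y$. Everything rests on the elementary computation $\nabla d_{p_i}(x)=2(x-p_i)$. Choosing a local nonzero tangent vector $\partial_t$ on $N$ (so that $\gamma'(t)=d\gamma_t(\partial_t)\neq 0$), this gives $d(f_p)_t(\partial_t)=2\bigl((\gamma(t)-p_1)\cdot\gamma'(t),\,(\gamma(t)-p_2)\cdot\gamma'(t)\bigr)$, while $f_p(t_1)=f_p(t_2)$ holds exactly when both $p_1$ and $p_2$ are equidistant from $\gamma(t_1)$ and $\gamma(t_2)$, i.e. lie on the perpendicular bisector of the chord $\gamma(t_1)\gamma(t_2)$.

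For (i) I would form the parametrized map $\Phi\colon N\times(\R^2\times\R^2)\to\R^2$, $\Phi(t,p)=d(f_p)_t(\partial_t)$, and verify that $\Phi\transv\{0\}$ (the target submanifold $\{0\}$ has codimension $2$). This is immediate from the $p$-derivative alone: varying $p_1$ changes the first coordinate of $\Phi$ by $-2\,\gamma'(t)\cdot\delta p_1$, an arbitrary scalar since $\gamma'(t)\neq 0$, and varying $p_2$ likewise controls the second coordinate, so $d\Phi$ is already surjective. Parametric transversality then yields, for a residual set of $p$, that $t\mapsto d(f_p)_t(\partial_t)$ avoids $0$, i.e. $f_p$ is an immersion. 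The immersivity of $\gamma$ is precisely what supplies the freedom here.

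For (ii) and (iii) I would pass to the multijet setting. Put $N^{(2)}=(N\times N)\setminus\Delta$ and consider ${}_2F\colon N^{(2)}\times(\R^2\times\R^2)\to\R^2\times\R^2$, ${}_2F(t_1,t_2,p)=(f_p(t_1),f_p(t_2))$; a direct check shows that transversality of ${}_2f_p$ to the diagonal $\Delta_{\R^2}\subset\R^2\times\R^2$ at a double point is exactly the normal-crossing condition $d(f_p)_{t_1}(T_{t_1}N)+d(f_p)_{t_2}(T_{t_2}N)=\R^2$. To see ${}_2F\transv\Delta_{\R^2}$, I would identify the normal space of $\Delta_{\R^2}$ with the difference $(f_p(t_1)-f_p(t_2))$ and note that varying $p_1$ moves this difference in its first slot by $-2\,(\gamma(t_1)-\gamma(t_2))\cdot\delta p_1$, which is freely controllable because $\gamma(t_1)\neq\gamma(t_2)$ (injectivity of $\gamma$), while $p_2$ controls the second slot; thus the whole normal space is hit. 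Condition (iii) is treated identically on $N^{(3)}$ against the triple diagonal, of codimension $4>\dim N^{(3)}=3$, so that generic transversality means avoidance. The point enabling the surjectivity of the $p$-derivative there is that at any genuine triple point the three values $\gamma(t_1),\gamma(t_2),\gamma(t_3)$ have a common equidistant point and hence are non-collinear, so $\gamma(t_1)-\gamma(t_2)$ and $\gamma(t_1)-\gamma(t_3)$ are independent; indeed for injective $\gamma$ the bisectors of two distinct chords through $\gamma(t_1)$ never coincide, so a triple point forces $p_1=p_2$, a coincidence the codimension count automatically excludes.

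Finally, each of (i), (ii), (iii) holds for $p$ in a residual subset of $\R^2\times\R^2$, and a finite intersection of residual sets is residual and therefore dense, which is the assertion; equivalently, this is the case $n=2$ with $N$ a plane curve of Corollary~8 in \cite{C}. The step I expect to require the most care is the transversality verification for ${}_2F$ against $\Delta_{\R^2}$: one must organize the derivative of the multijet map so as to read off its normal (difference) component cleanly and confirm that the two independent perturbations $\delta p_1,\delta p_2$ span it, the whole mechanism resting on the distinctness $\gamma(t_1)\neq\gamma(t_2)$. Non-compactness of $N$ only forces the conclusion to be phrased as residual/dense rather than open-dense, which is exactly what is claimed.
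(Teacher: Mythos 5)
Your argument is sound, but it cannot be compared line-by-line with a proof in the paper, because the paper does not prove this proposition at all: it is imported wholesale, as the specialization to plane curves of Corollary~8 of \cite{C}, a general theorem on compositions of injective immersions with generically linearly perturbed mappings. Your blind reconstruction is therefore a genuinely different, self-contained route, and it is the expected one: writing $f_p=D_p\circ\gamma$, you decompose ``immersion with normal crossings'' into (i) immersivity, (ii) transversality of $(t_1,t_2)\mapsto(f_p(t_1),f_p(t_2))$ on $N^{(2)}$ to the diagonal, and (iii) absence of triple points, and you win each for a residual set of $p$ by parametric transversality, the key point being that the $p$-derivatives alone are surjective --- onto $\R^2$ for (i) because $\gamma'(t)\neq 0$, and onto the normal space of the diagonal for (ii) because $\gamma(t_1)\neq\gamma(t_2)$ by injectivity of $\gamma$; both verifications are correct, as is your identification of diagonal-transversality with the normal-crossing condition. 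Two refinements are worth making. First, your treatment of (iii) is heavier than necessary and slightly tangled: as you yourself observe, a triple point forces both $p_1$ and $p_2$ to be equidistant from the three distinct (hence, since an equidistant point exists, non-collinear) points $\gamma(t_1),\gamma(t_2),\gamma(t_3)$, so both equal the circumcenter and $p_1=p_2$; consequently every $p$ off the closed measure-zero set $\{p_1=p_2\}$ already has no triple points, and no multijet argument on $N^{(3)}$ is needed. Second, since $N$ need not be compact, the residuality (not mere density) of the good sets in (i) and (ii) should be justified by exhausting $N$, respectively $N^{(2)}$, by countably many compact sets and using that $\{0\}$ and the diagonal are closed, so that the finite intersection at the end is residual and hence dense, as you claim. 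As for what each approach buys: the paper's citation of \cite{C} delivers the statement as an instance of a result valid in all dimensions and for any number of points, while your proof gives a short, elementary argument specific to two points and plane curves --- exactly the case this paper actually uses.
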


On the other hand, the purpose of this paper is to 
investigate whether the following set 
\begin{eqnarray*}
\left \{ p\in \gamma(N) \times \gamma(N) \mid D_{p}\circ \gamma : N\to \R^2 {\mbox { is an immersion with normal crossings}} \right \}
\end{eqnarray*}
is dense in $\gamma(N) \times \gamma(N)$ or not. 
Here, note that $O$ is an open set of $\gamma(N)\times \gamma(N)$ 
if there exists an open set $O'$ of $\R^2\times \R^2$ satisfying 
$O=O'\cap(\gamma(N)\times \gamma(N))$. 

Let $\gamma: N\to \R^2$ be an immersion. 
We say that $\kappa : U\to \R$ is called the {\it curvature} of $\gamma$ 
on a coordinate neighborhood $(U,t)$ if 
\begin{eqnarray*}
\kappa(t)&=&\frac
{
 \det 
\left(
\begin{array}{cc}
\displaystyle{\frac{d \gamma_2}{dt}(t)} & 
-\displaystyle{\frac{d \gamma_1}{dt}(t)} 
\\
 [4.5mm] 
\displaystyle{\frac{d ^2\gamma_2}{dt^2}(t)} & 
-\displaystyle{\frac{d ^2\gamma_1}{dt^2}(t)} 
\end{array}
\right)
}
{
\left(\displaystyle{\frac{d \gamma_1}{dt}(t)}^2
+
\displaystyle{\frac{d \gamma_2}{dt}(t)}^2
\right)^{\frac{3}{2}}
}, 
\end{eqnarray*}
where $\gamma=(\gamma_1,\gamma_2)$. 
Note that it dose not depend on the choice of a coordinate 
neighborhood whether $\kappa(q)=0$ for a given point $q\in N$ or not.  
\begin{definition}
{\rm 
Let $N$ be a manifold of dimension 1. 
We say that an immersion $\gamma: N\to \R^2$ {\it satisfies $(\ast)$} 
if for any non-empty open set $U$ of $N$, 
there exists a point $q\in U$ satisfying $\kappa (q)\not =0$, 
where $\kappa$ 
is the curvature of $\gamma$ on a coordinate neighborhood around $q$. 
}
\end{definition}
The main result in this paper is the following. 
\begin{theorem}\label{main}
Let $\gamma: N\to \R^2$ be an injective immersion 
satisfying $(\ast)$, where $N$ is a manifold of dimension $1$. 
Then, the following set 
\begin{eqnarray*}
\left \{ p\in \gamma(N) \times \gamma(N) \mid D_{p}\circ \gamma : N\to \R^2 {\mbox { is an immersion with normal crossings}} \right \}
\end{eqnarray*}
is dense in $\gamma(N) \times \gamma(N)$. 
\end{theorem}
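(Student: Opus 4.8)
\emph{Plan of proof.}
The plan is to transport the problem to $N\times N$. Writing $p=(\gamma(s_1),\gamma(s_2))$, the assignment $q=\gamma\times\gamma\colon N\times N\to\gamma(N)\times\gamma(N)$ is continuous and surjective, so if the set $G$ of pairs $(s_1,s_2)$ for which $D_p\circ\gamma$ is an immersion with normal crossings is dense in $N\times N$, then $q(G)$ is dense in $\gamma(N)\times\gamma(N)$: any nonempty relatively open $O$ has nonempty open preimage $q^{-1}(O)$, which $G$ must meet, whence $q(G)$ meets $O$. Since $N\times N$ is a Baire space, it then suffices to exhibit the complement $(N\times N)\setminus G$ as a countable union of closed nowhere dense sets. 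I first record the form of $(\ast)$ that I will use: the numerator of $\kappa$ equals $\det(\gamma',\gamma'')$, so $(\ast)$ says precisely that the points $t$ at which $\gamma'(t)$ and $\gamma''(t)$ are linearly independent are dense in $N$, and at such a $t$ a vector orthogonal to both $\gamma'(t)$ and $\gamma''(t)$ must be zero. Throughout I abbreviate $h=D_p\circ\gamma$, so that $h'(t)=2(\langle\gamma(t)-\gamma(s_1),\gamma'(t)\rangle,\langle\gamma(t)-\gamma(s_2),\gamma'(t)\rangle)$.

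First I would dispose of two global obstructions. The set where $p_1=p_2$ is, by injectivity of $\gamma$, the diagonal $\{s_1=s_2\}$, which is closed and nowhere dense, so I may restrict to $p_1\neq p_2$. For such $p$ the map $h$ has no triple point: if $h(t_1)=h(t_2)=h(t_3)$ with $t_1,t_2,t_3$ distinct, then $\gamma(t_1),\gamma(t_2),\gamma(t_3)$ are three distinct points equidistant from $p_1$ and equidistant from $p_2$, hence lie on two circles with distinct centres, which meet in at most two points — absurd. Thus condition (1) in the definition of a mapping with normal crossings holds automatically, and what remains is to force $h$ to be an immersion and every double point to be transverse.

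The immersion condition fails at $t$ exactly when $\gamma(s_1)$ and $\gamma(s_2)$ both lie on the normal line $L_t=\{x:\langle x-\gamma(t),\gamma'(t)\rangle=0\}$. If $t\in\{s_1,s_2\}$, say $t=s_1$, this collapses to $\langle\gamma(s_2)-\gamma(s_1),\gamma'(s_1)\rangle=0$; were it to hold on an open product $I_1\times I_2$, then for fixed $s_1$ the function $s_2\mapsto\langle\gamma(s_2)-\gamma(s_1),\gamma'(s_1)\rangle$ would vanish together with its first two $s_2$-derivatives on $I_2$, forcing $\gamma'(s_1)$ orthogonal to both $\gamma'(s_2)$ and $\gamma''(s_2)$, which contradicts $(\ast)$ at a suitable $s_2$. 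Hence this locus is closed with empty interior. If $t\notin\{s_1,s_2\}$, I would regard the two defining equations as cutting out an incidence set $\mathcal I\subseteq N^3$ in $(t,s_1,s_2)$ and bound the projection of $\mathcal I$ to $(s_1,s_2)$. Its fibre over a fixed $t$ is $S_t\times S_t$ with $S_t=\{s:\gamma(s)\in L_t\}$, and the very same differentiation argument shows that $(\ast)$ prevents $S_t$ from containing an interval. Granting that empty interior of every fibre forces $\mathcal I$ to be at most one-dimensional, its projection is nowhere dense and, after a $\sigma$-compact decomposition, a countable union of closed nowhere dense subsets of $N\times N$.

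For the double points, $h(t)=h(t')$ with $t\neq t'$ means $p_1,p_2$ lie on the perpendicular bisector of $\gamma(t)$ and $\gamma(t')$; since $p_1\neq p_2$, that bisector must be the line $\overline{p_1p_2}$, so double points are pairs of curve points symmetric in $\overline{p_1p_2}$, and failure of transversality is the single further equation $\det(h'(t),h'(t'))=0$. Collecting these three scalar equations into an incidence set $\mathcal Z\subseteq N^4$ in $(t,t',s_1,s_2)$ and projecting to $(s_1,s_2)$, I would again aim to show the image is a countable union of closed nowhere dense sets. The main obstacle is exactly the dimension bookkeeping underlying both this step and the interior immersion step: since the theorem must hold for \emph{every} injective immersion satisfying $(\ast)$, not a generic one, I cannot invoke a parametric transversality theorem to arrange that the incidence equations cut out sets of the expected dimension. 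Instead I must analyse by hand the strata where these equations drop rank — chords becoming tangent to the curve, or inflection points where $\det(\gamma',\gamma'')=0$ — and show that $(\ast)$, the density of points of nonzero curvature, is precisely what prevents those strata from acquiring interior. Verifying that each resulting bad locus is genuinely closed and nowhere dense, in the smooth category where the relevant zero sets need not be discrete and a naive dimension count can fail, is the delicate heart of the argument.
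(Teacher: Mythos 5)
Your reduction to $N\times N$, the removal of the diagonal, and the observation that distinct foci $p_1\neq p_2$ forbid triple points are all correct. But the heart of your argument --- that the remaining bad loci (pairs $(s_1,s_2)$ for which some $t$ is a singular point, or some double point is non-transverse) form a countable union of closed nowhere dense subsets of $N\times N$ --- is exactly what you do not prove, and the step you ``grant'' is unjustified and, as stated, false in the smooth category. Empty interior of every fibre $S_t\times S_t$ does not force the incidence set $\mathcal{I}\subset N^3$ to be one-dimensional, nor its projection to $(s_1,s_2)$ to be nowhere dense: $S_t$ is merely the zero set of the smooth function $s\mapsto\langle\gamma(s)-\gamma(t),\gamma'(t)\rangle$, which can be any closed set without interior (e.g.\ a fat Cantor set), and the hypothesis $(\ast)$ --- density of points of nonzero curvature --- only excludes intervals from each fibre; it gives no control whatsoever on how the fibres vary with $t$, so the union $\bigcup_t S_t\times S_t$ could a priori still be somewhere dense. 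You correctly note that parametric transversality is unavailable because the curve is fixed, but you offer no substitute for it; by your own admission the ``delicate heart of the argument'' is left open. As it stands, the proposal is a program, not a proof.

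For comparison, the paper never stratifies any bad set, and this is worth internalizing because it shows how weak a hypothesis $(\ast)$ really is and how little it needs to be used. The paper combines two ingredients absent from your sketch: (i) the ambient density result (\cref{semimain}, quoted from \cite{C}), which says that good pairs $p$ are dense in all of $\R^2\times\R^2$; and (ii) the affine-equivalence observation (\cref{mainpro}) that if $(p_1,p_2)$ and $(\widetilde{p}_1,\widetilde{p}_2)$ are two pairs of distinct points lying on one straight line, then $D_{\widetilde{p}}=H\circ D_{p}$ for some affine transformation $H$ of the target, so $D_{p}\circ\gamma$ and $D_{\widetilde{p}}\circ\gamma$ are simultaneously immersions with normal crossings. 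The hypothesis $(\ast)$ is used exactly once, via \cref{mainlem}, to produce a single point at which the chord map
\begin{eqnarray*}
\Phi(t_1,t_2,s_1,s_2)=\left(\gamma(t_1)+s_1\overrightarrow{\gamma(t_1)\gamma(t_2)},\ \gamma(t_1)+s_2\overrightarrow{\gamma(t_1)\gamma(t_2)}\right)
\end{eqnarray*}
has invertible Jacobian; then $\Phi$ covers an open subset of $\R^4$, one chooses there a good ambient pair $p'$ off the diagonal, and slides it along its own chord to the two curve points $\gamma(t_1')\in O_1$, $\gamma(t_2')\in O_2$ that span that chord. This one trick replaces your entire case analysis. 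To salvage your route you would have to supply precisely the nowhere-density arguments you deferred, and $(\ast)$ alone appears too weak to deliver them fibre by fibre.
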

If we drop the hypothesis $(\ast)$ in \cref{main}, 
then the conclusion of \cref{main} does not necessarily hold 
(see \cref{sec:drop}).  

In \cref{main}, if the mapping $D_{p}\circ \gamma : N\to \R^2$ 
is proper, then the immersion with normal crossings $D_{p}\circ \gamma : N\to \R^2$ 
is necessarily stable (see \cite{GG}, p. 86). 
Thus, from \cref{main}, we get the following. 
\begin{corollary}\label{maincoro}
Let $N$ be a compact manifold of dimension $1$. 
Let $\gamma: N\to \R^2$ be an embedding satisfying $(\ast)$. 
 Then, the following set 
\begin{eqnarray*}
\left \{ p\in \gamma(N) \times \gamma(N) \mid D_{p}\circ \gamma : N\to \R^2 {\mbox { is stable}} \right \}
\end{eqnarray*}
is dense in $\gamma(N) \times \gamma(N)$. 
\end{corollary}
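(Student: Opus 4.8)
The plan is to deduce \cref{maincoro} directly from \cref{main} together with the properness observation recorded just before its statement. Since the substantive analytic work is already contained in \cref{main}, the task here is purely formal: verify that the ``immersion with normal crossings'' condition upgrades to ``stable'' for every relevant $p$, and then transfer density across the resulting inclusion of sets.

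First I would note that an embedding $\gamma: N\to \R^2$ is in particular an injective immersion, so $\gamma$ satisfies the hypotheses of \cref{main}. Applying \cref{main}, the set
\[
A=\left\{p\in \gamma(N)\times \gamma(N)\mid D_p\circ \gamma : N\to \R^2 \mbox{ is an immersion with normal crossings}\right\}
\]
is dense in $\gamma(N)\times \gamma(N)$. Next I would establish properness: since $N$ is compact and $D_p\circ \gamma : N\to \R^2$ is continuous, the preimage of any compact set is a closed subset of the compact space $N$, hence compact; therefore $D_p\circ \gamma$ is proper for every $p$. Consequently, for each $p\in A$ the mapping $D_p\circ \gamma$ is a proper immersion with normal crossings, and by the cited fact (see \cite{GG}, p.~86) such a mapping is stable. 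This shows $A\subseteq B$, where
\[
B=\left\{p\in \gamma(N)\times \gamma(N)\mid D_p\circ \gamma : N\to \R^2 \mbox{ is stable}\right\}.
\]
Since $A$ is dense and $A\subseteq B$, the superset $B$ is dense in $\gamma(N)\times \gamma(N)$, which is the desired conclusion.

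The only point requiring any verification is the properness step, and this is immediate from the compactness of $N$; there is no genuine obstacle beyond \cref{main} itself, which carries all the content. In particular, the compactness hypothesis enters solely to guarantee properness, so that the criterion of \cite{GG} applies and promotes each normal-crossing immersion parametrized by a point of $A$ to a stable mapping.
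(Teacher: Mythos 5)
Your proposal is correct and follows exactly the route the paper takes: apply \cref{main} (an embedding being in particular an injective immersion), observe that compactness of $N$ makes $D_p\circ\gamma$ proper, and invoke the cited fact from \cite{GG} that a proper immersion with normal crossings is stable, so the dense set from \cref{main} is contained in the set in question. Your explicit verification of properness and of the density-of-supersets step merely spells out what the paper leaves implicit.
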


%

\par 
\bigskip 
In \cref{sec:drop}, two examples that the conclusion of 
\cref{main} does not necessarily hold 
without the hypothesis $(\ast)$ in the theorem are given. 
In \cref{sec:mainlem}, some assertions for the proof of \cref{main} 
are prepared. 
\cref{sec:main} is devoted to the proof of \cref{main}.

\section{Dropping the hypothesis $(\ast)$ in \cref{main}}\label{sec:drop}
In this section, two examples that the conclusion of 
\cref{main} does not necessarily hold 
without the hypothesis $(\ast)$ in the theorem are given 
(see \cref{ex:1,ex:2}). 
 
Firstly, we prepare the following proposition, which is used in \cref{ex:1}.  
\begin{proposition}\label{drop}
Let $\gamma: N\to \R^2$ be an immersion, 
where $N$ is a manifold of dimension $1$. 
Let $p_1$, $p_2$ be two points of $\R^2$. 
Then, a point $q\in N$ is a singular point of the mapping 
$D_{p}\circ \gamma : N\to \R^2$ $(p=(p_1,p_2))$ if and only if 
\begin{eqnarray*}
\overrightarrow{p_1\gamma(q)}\cdot \frac{d \gamma}{dt}(q)=0 
{\mbox {\  and \ }} 
\overrightarrow{p_2\gamma(q)}\cdot \frac{d \gamma}{dt}(q)=0, 
\end{eqnarray*}
where $t$ is a local coordinate around the point $q$ and 
$\cdot $ stands the inner product of $\R^2$. 
\end{proposition}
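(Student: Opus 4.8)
The plan is to compute the differential of $D_p\circ\gamma$ in a local coordinate and read off exactly when it fails to have maximal rank. Since $\dim N=1$ and the target is $\R^2$, the maximal rank of $d(D_p\circ\gamma)_q$ is $1$; hence $q$ is a singular point precisely when this differential vanishes, that is, when both entries of the Jacobian (a $2\times 1$ matrix) are zero. Thus the whole statement reduces to computing two scalar derivatives and recognizing them as inner products.

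First I would fix a local coordinate $t$ around $q$ and write $\gamma=(\gamma_1,\gamma_2)$ together with $p_i=((p_i)_1,(p_i)_2)$. Recalling that $d_{p_i}(x)=(x_1-(p_i)_1)^2+(x_2-(p_i)_2)^2$, the chain rule gives
\[
\frac{d}{dt}(d_{p_i}\circ\gamma)(q)
=2\bigl(\gamma_1(q)-(p_i)_1\bigr)\frac{d\gamma_1}{dt}(q)
+2\bigl(\gamma_2(q)-(p_i)_2\bigr)\frac{d\gamma_2}{dt}(q).
\]
The key observation is that the right-hand side equals $2\,\overrightarrow{p_i\gamma(q)}\cdot\frac{d\gamma}{dt}(q)$, since $\overrightarrow{p_i\gamma(q)}=\gamma(q)-p_i=(\gamma_1(q)-(p_i)_1,\gamma_2(q)-(p_i)_2)$ and $\frac{d\gamma}{dt}(q)=(\frac{d\gamma_1}{dt}(q),\frac{d\gamma_2}{dt}(q))$.

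Next I would assemble the Jacobian matrix of $D_p\circ\gamma=(d_{p_1}\circ\gamma,\,d_{p_2}\circ\gamma)$ at $q$ with respect to $t$, which is the $2\times 1$ matrix whose two entries are the derivatives computed above for $i=1$ and $i=2$. Then $q$ is a singular point of $D_p\circ\gamma$ if and only if this matrix has rank $0$, i.e.\ both entries vanish; discarding the harmless factor $2$ yields exactly the two stated orthogonality conditions $\overrightarrow{p_1\gamma(q)}\cdot\frac{d\gamma}{dt}(q)=0$ and $\overrightarrow{p_2\gamma(q)}\cdot\frac{d\gamma}{dt}(q)=0$.

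There is really no serious obstacle here: the proposition is a direct local-coordinate computation. The only point requiring a moment's care is the convention that, for a map from a $1$-dimensional source into $\R^2$, a singular point is one where the differential drops below its maximal possible rank $1$, which is equivalent to the differential being identically zero; once this is noted, the equivalence is immediate. I would also remark that the characterization is independent of the choice of local coordinate $t$, since rescaling $t$ multiplies $\frac{d\gamma}{dt}(q)$ by a nonzero scalar and hence does not affect the vanishing of the two inner products.
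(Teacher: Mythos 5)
Your proposal is correct and follows essentially the same route as the paper's own proof: compute the derivative of $D_p\circ\gamma$ in a local coordinate via the chain rule, recognize each entry as $2\,\overrightarrow{p_i\gamma(q)}\cdot\frac{d\gamma}{dt}(q)$, and observe that singularity of a map from a $1$-dimensional source means the differential vanishes. Your added remark on independence of the choice of local coordinate is a small bonus not present in the paper but changes nothing substantive.
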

\begin{proof}
Let $q$ be a point of $N$.  
The composition of $\gamma: N\to \R^2$ and $D_{p} : \R^2 \to \R^2$ is given 
as follows:   
\begin{eqnarray*}
D_{p}\circ \gamma (q)=\left( (\gamma_{1}{(q)}-p_{11})^2+(\gamma_{2}(q)-p_{12})^2, 
(\gamma_{1}(q)-p_{21})^2+(\gamma_{2}(q)-p_{22})^2 \right),
\end{eqnarray*}
where  
$p_1=(p_{11}, p_{12})$, $p_2=(p_{21}, p_{22})$ and 
$\gamma =(\gamma_1,\gamma_2)$. 

Then, we have 
\begin{eqnarray*}
\frac{dD_{p}\circ \gamma}{dt}(q)
&=&2\left ((\gamma_{1}{(q)}-p_{11})\frac{d\gamma_1}{dt}(q)+
(\gamma_{2}(q)-p_{12})\frac{d\gamma_2}{dt}(q), \right. 
\\
&&\left.
(\gamma_{1}(q)-p_{21})\frac{d\gamma_1}{dt}(q)+
(\gamma_{2}(q)-p_{22})\frac{d\gamma_2}{dt}(q)\right)
\\
&=&2\left(\overrightarrow{p_1\gamma(q)}\cdot \frac{d \gamma}{dt}(q), 
\overrightarrow{p_2\gamma(q)}\cdot \frac{d \gamma}{dt}(q)
\right), 
\end{eqnarray*}
where $t$ is a local coordinate around the point $q$. 
Hence, a point $q$ is a singular point of the mapping $D_{p}\circ \gamma$ 
if and only if 
\begin{eqnarray*}
\left(\overrightarrow{p_1\gamma(q)}\cdot \frac{d \gamma}{dt}(q), 
\overrightarrow{p_2\gamma(q)}\cdot \frac{d \gamma}{dt}(q)
\right)=(0,0).
\end{eqnarray*}
\end{proof} 

\begin{example}\label{ex:1}
{\rm 
In this example, we use \cref{drop}. 
Let $\gamma: S^1\to \R^2$ be an embedding 
such that $\gamma(S^1)$ is given by \cref{fig:ex1}. 
Here, note that 
there exists an open set $U$ of $N$ satisfying 
for any $q\in U$, $\kappa(q)=0$ (see $\gamma(U)$ in \cref{fig:ex1}). 
Namely, $\gamma$ does not satisfy $(\ast)$. 

Let $p=(p_1,p_2)\in \gamma (U)\times \gamma(U)$ be any point.  
Then, we will show that  
the mapping $D_{p}\circ \gamma$ is not an immersion.  
From \cref{fig:ex1}, it is clearly seen that 
\begin{eqnarray*}
\overrightarrow{p_1\gamma(q')}\cdot \frac{d \gamma}{dt}(q')=0 
{\mbox {\  and \ }} 
\overrightarrow{p_2\gamma(q')}\cdot \frac{d \gamma}{dt}(q')=0, 
\end{eqnarray*}
where $\gamma(q')$ is the point in \cref{fig:ex1} 
and $t$ is a local coordinate around the point $q'$. 
From \cref{drop}, the point $q'$ is a singular point of $D_p\circ\gamma$. 
Namely, for any $p=(p_1,p_2)\in \gamma (U)\times \gamma(U)$, 
the mapping  $D_p\circ\gamma$ is not an immersion. 
Since $\gamma (U)\times \gamma(U)$ is a non-empty open set of 
$\gamma (S^1)\times \gamma(S^1)$, 
the conclusion of \cref{main} does not hold. 
\begin{figure}[h]
\begin{center}
\includegraphics[width=0.8\linewidth]{ex1.eps} 

\caption{
The figure of Example 1
}
\label{fig:ex1}
\end{center}
\end{figure}
}
\end{example}

\begin{example}\label{ex:2}
{\rm 
Let $I_1$, $I_2$ and $I_3$ be open intervals $(0,1)$, $(1,2)$ and $(2,3)$ of $\R$, 
respectively. 
Let $\gamma : I_1\cup I_2\cup I_3\to \R^2$ be the mapping given by 
\begin{eqnarray*}
\gamma(t) = 
\begin{cases}
(t,-1) & t\in I_1 \\
(t-1,0) & t\in I_2 \\
(t-2,1) & t\in I_3. 
\end{cases}
\end{eqnarray*}
For the image of $\gamma$, see \cref{fig:ex2}. 
Let $p=(p_1, p_2)\in \gamma (I_2)\times \gamma (I_2)$ be any point. 
Then, we will show that $D_{p}\circ \gamma$ is not a mapping with normal crossings. 
By $p_1, p_2\in \gamma (I_2)$, we have 
\begin{eqnarray*}
D_p(x_1,x_2)=\left((x_1-p_{11})^2+x_2^2, (x_1-p_{21})^2+x_2^2\right). 
\end{eqnarray*}
Let $t_0\in I_1$ be any element. Then, 
it follows that $t_0+2\in I_3$ and 
\[
(D_p\circ \gamma)(t_0)=(D_p\circ \gamma)(t_0+2).
\] 
From 
\begin{eqnarray*}
(D_p\circ \gamma)|_{I_1}(t)&=&((t-p_{11})^2+1, (t-p_{21})^2+1), 
\\
(D_p\circ \gamma)|_{I_3}(t)&=&((t-2-p_{11})^2+1, (t-2-p_{21})^2+1), 
\end{eqnarray*}
we get 
\begin{eqnarray*}
d(D_{p}\circ \gamma)_{t_0}&=&2
\left(
\begin{array}{c}
t-p_{11}
\\
t-p_{21}
\end{array}
\right)_{t=t_0}, 
\\
d(D_{p}\circ \gamma)_{t_0+2}&=&2
\left(
\begin{array}{c}
t-2-p_{11}
\\
t-2-p_{21}
\end{array}
\right)_{t=t_0+2}. 
\end{eqnarray*}
Since the rank of the $2\times 2$ matrix 
$(d(D_{p}\circ \gamma)_{t_0}, d(D_{p}\circ \gamma)_{t_0+2})$ is less than two, 
$D_{p}\circ \gamma$ is not a mapping with normal crossings. 
Hence, for any $p=(p_1, p_2)\in \gamma (I_2)\times \gamma (I_2)$, 
$D_{p}\circ \gamma$ is not a mapping with normal crossings. 
\begin{figure}[h]
\begin{center}
\includegraphics[width=0.5\linewidth]{ex2_1.eps} 

\caption{
The figure of Example 2
}
\label{fig:ex2}
\end{center}
\end{figure}
}
\end{example}

\begin{remark}
{\rm 
There is a case that the conclusion of \cref{main} holds 
without the hypothesis $(\ast)$ in the theorem. 
Let $\gamma :\R \to \R^2$ be the mapping defined by 
$\gamma (t)=(t,0)$. 
Set 
\begin{eqnarray*}
A=\left \{ p\in \gamma(\R) \times \gamma(\R) \mid D_{p}\circ \gamma : \R\to \R^2 {\mbox { is an immersion with normal crossings}} \right \}.
\end{eqnarray*}
We will show that $A$ is dense in $\gamma (\R)\times \gamma (\R)$. 
Let $p_1=(p_{11},p_{12}), p_2=(p_{21},p_{22})\in \gamma (\R)$ $(=\R \times \{0\})$ 
be arbitrary points. 
Then, we have 
\begin{eqnarray*}
D_{p}\circ \gamma (t)=\left( (t-p_{11})^2, (t-p_{21})^2 \right),
\end{eqnarray*}
where $p=(p_1,p_2)$. 
It is not hard to see that if $p_{11}\not =p_{21}$, then there exists a 
diffeomorphism $H:\R^2\to \R^2$ such that $H\circ D_{p}\circ \gamma(t)=(t,0)$. 
Namely, if $p_{11}\not =p_{21}$, then $D_{p}\circ \gamma $
is an immersion 
with normal crossings. 
On the other hand, if $p_{11} =p_{21}$, then $D_{p}\circ \gamma$ is not an immersion 
with normal crossings. 
Hence, 
\[A=\{p\in \gamma (\R)\times \gamma (\R) \mid p_{11}\not =p_{21}\}. 
\]
Thus, $A$ is dense in $\gamma (\R)\times \gamma (\R)$. 
}
\end{remark}
\section{Preliminaries for the proof of \cref{main}}\label{sec:mainlem}
For the proof of \cref{main}, we prepare \cref{mainpro} and 
\cref{mainlem}
\begin{proposition}\label{mainpro}
Let $L$ be a straight line of $\R^2$. 
For any $p_1, p_2\in L$ $(p_1\not =p_2)$ and for any 
$\widetilde{p}_1, \widetilde{p}_2\in L$ $(\widetilde{p}_1\not = \widetilde{p}_2)$, 
there exists an affine transformation $H:\R^2 \to \R^2$ such that
\begin{eqnarray*}
H\circ D_p=D_{\widetilde{p}},
\end{eqnarray*}
where $p=(p_1,p_2)$ and $\widetilde{p}=(\widetilde{p}_1,\widetilde{p}_2)$. 
\end{proposition}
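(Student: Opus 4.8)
The plan is to exploit the fact that all four centres lie on the common line $L$, so that each of the four distance-squared functions $d_{p_1}, d_{p_2}, d_{\widetilde{p}_1}, d_{\widetilde{p}_2}$ depends on $x$ only through the two quantities $\rho(x)=|x-O|^2$ and $\xi(x)=(x-O)\cdot e$, where $O$ is a fixed point of $L$ and $e$ is a unit direction vector of $L$. First I would parametrize the centres as $p_1=O+\alpha e$, $p_2=O+\beta e$, $\widetilde{p}_1=O+\widetilde{\alpha}e$, $\widetilde{p}_2=O+\widetilde{\beta}e$, where $\alpha\neq\beta$ and $\widetilde{\alpha}\neq\widetilde{\beta}$ encode the hypotheses $p_1\neq p_2$ and $\widetilde{p}_1\neq\widetilde{p}_2$. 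Expanding $d_{O+se}(x)=|x-O|^2-2s\,(x-O)\cdot e+s^2$ then yields the uniform expression $d_{O+se}(x)=\rho(x)-2s\,\xi(x)+s^2$, valid for every centre simultaneously.

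The next step is to read off the affine relations. Writing $D_p=(d_{p_1},d_{p_2})$, the map $(\rho,\xi)\mapsto(d_{p_1},d_{p_2})$ is affine with linear part $\left(\begin{smallmatrix}1 & -2\alpha \\ 1 & -2\beta\end{smallmatrix}\right)$, whose determinant is $2(\alpha-\beta)\neq 0$; hence it is an invertible affine map, and its inverse expresses $(\rho,\xi)$ as an affine function of $D_p$. Concretely, the difference $d_{p_1}-d_{p_2}=-2(\alpha-\beta)\xi+(\alpha^2-\beta^2)$ recovers $\xi$ (here $\alpha\neq\beta$ is essential), after which $\rho$ is recovered from $d_{p_1}$. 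Call this invertible affine map $G_1\colon D_p\mapsto(\rho,\xi)$.

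Symmetrically, the map $G_2\colon(\rho,\xi)\mapsto(d_{\widetilde{p}_1},d_{\widetilde{p}_2})$ is affine with linear part $\left(\begin{smallmatrix}1 & -2\widetilde{\alpha} \\ 1 & -2\widetilde{\beta}\end{smallmatrix}\right)$ of determinant $2(\widetilde{\alpha}-\widetilde{\beta})\neq 0$, hence invertible. Setting $H=G_2\circ G_1$ produces an invertible affine transformation of $\R^2$ satisfying $H\circ D_p=D_{\widetilde{p}}$, as required.

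The computations are entirely routine; the one conceptual point, and the step I would be most careful about, is the cancellation of the quadratic term: because the four centres are collinear, the full quadratic part $|x|^2$ of each $d_{p_i}$ is absorbed into the single common function $\rho$, so that all four functions lie in the two-dimensional affine space spanned by $\rho$, $\xi$ and the constant $1$. The hypotheses $p_1\neq p_2$ and $\widetilde{p}_1\neq\widetilde{p}_2$ are precisely what guarantee that each of the pairs $(d_{p_1},d_{p_2})$ and $(d_{\widetilde{p}_1},d_{\widetilde{p}_2})$ forms an affine coordinate system on this space, which is what makes $G_1$ invertible and $H$ well defined.
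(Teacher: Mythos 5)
Your proof is correct, and it rests on exactly the insight the paper uses --- collinearity of the four centres means all four distance-squared functions differ only by affine expressions in a single linear form along $L$ --- but you organize the reduction differently. The paper never inverts anything: it constructs $H$ explicitly as a chain $H_4\circ H_3\circ H_2\circ H_1$ of elementary affine maps, where $H_1(X_1,X_2)=(X_1,X_1-X_2)$ replaces the second component by a linear function (the quadratic terms cancel in the difference), $H_2$ and $H_4$ absorb constants, and the key step $H_3(X_1,X_2)=(X_1-\lambda_1X_2,\,X_1-\lambda_2X_2)$ reassembles $d_{\widetilde{p}_1},d_{\widetilde{p}_2}$ using the affine parameters $\lambda_i$ defined by $\widetilde{p}_i=p_1+\lambda_i\overrightarrow{p_1p_2}$; the hypothesis $\widetilde{p}_1\neq\widetilde{p}_2$ enters as $\lambda_1\neq\lambda_2$, which makes $H_3$ invertible. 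You instead factor symmetrically through the common coordinates $(\rho,\xi)$: both $D_p$ and $D_{\widetilde{p}}$ are affine images of $(\rho,\xi)$ with invertible linear parts (determinants $2(\alpha-\beta)$ and $2(\widetilde{\alpha}-\widetilde{\beta})$), and $H=G_2\circ G_1$ is obtained by inverting the first of these. Your version buys a cleaner conceptual statement (all four functions lie in the affine span of $\rho$, $\xi$, and $1$, and each nondegenerate pair of centres gives affine coordinates on that plane) and makes the roles of the two hypotheses transparent --- $p_1\neq p_2$ is what lets $H$ exist, while $\widetilde{p}_1\neq\widetilde{p}_2$ only makes it invertible; the paper's version buys completely explicit formulas for $H$ with no appeal to an abstract inverse, at the cost of a longer computation and a less symmetric presentation.
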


\begin{proof}

Set $p_1=(p_{11}, p_{12})$, $p_2=(p_{21}, p_{22})$, 
$\widetilde{p}_1=(\widetilde{p}_{11}, \widetilde{p}_{12})$ and 
$\widetilde{p}_2=(\widetilde{p}_{21}, \widetilde{p}_{22})$. 

Let $H_1:\R^2\to \R^2$ be the linear transformation defined by 
\begin{eqnarray*}
H_1(X_1,X_2)=(X_1,X_1-X_2). 
\end{eqnarray*}
Then, we have 
\begin{eqnarray*}
{ } & { } &
H_1 \circ D_p(x_1, x_2)\\
&=&\left((x_1-p_{11})^2+(x_2-p_{12})^2, 2((p_{21}-p_{11})x_1+(p_{22}-p_{12})x_2)+c_1\right),  
\end{eqnarray*}
where $c_1$ is a constant term. 

Let $H_2:\R^2\to \R^2$ be the affine transformation defined by 
\begin{eqnarray*}
H_2(X_1,X_2)=(X_1,X_2-c_1). 
\end{eqnarray*}
Then, we get 
\begin{eqnarray*}
{ } & { } &
H_2\circ H_1 \circ D_p(x_1, x_2)\\
&=&\left((x_1-p_{11})^2+(x_2-p_{12})^2, 2((p_{21}-p_{11})x_1+(p_{22}-p_{12})x_2)\right). 
\end{eqnarray*}

By $p_1, p_2, \widetilde{p}_1, \widetilde{p}_2 \in L$ and $p_1\not =p_2$, 
there exist $\lambda_1, \lambda_2 \in \R$ satisfying 
\begin{eqnarray}
\widetilde{p}_1&=&p_1+\lambda_1 \overrightarrow{p_1p_2}, \label{eq:vector1}
\\
\widetilde{p}_2&=&p_1+\lambda_2 \overrightarrow{p_1p_2}.  \label{eq:vector2}
\end{eqnarray}
By $\widetilde{p}_{1}\not =\widetilde{p}_2$, 
we get $\lambda_1\not =\lambda_2$. 

Let $H_3:\R^2\to \R^2$ be the linear transformation defined by 
\begin{eqnarray*}
H_3(X_1,X_2)=(X_1-\lambda_1X_2, X_1-\lambda_2 X_2). 
\end{eqnarray*}
Then, we get 
\begin{eqnarray*}
{ } & { } &
H_3\circ H_2\circ H_1 \circ D_p(x_1, x_2)\\
&=&\left(
x_1^2-2(p_{11}+\lambda_1(p_{21}-p_{11}))x_1
+x_2^2-2(p_{12}+\lambda_1(p_{22}-p_{12}))x_2+d_1, 
\right.
\\
&&\left. 
x_1^2-2(p_{11}+\lambda_2(p_{21}-p_{11}))x_1
+x_2^2-2(p_{12}+\lambda_2(p_{22}-p_{12}))x_2+d_2
\right), 
\end{eqnarray*}
where $d_1, d_2$ are constant terms. 
By (\ref{eq:vector1}) and (\ref{eq:vector2}), we also get 
\begin{eqnarray*}
{ } & { } &
H_3\circ H_2\circ H_1 \circ D_p(x_1, x_2)\\
&=&\left(
x_1^2-2\widetilde{p}_{11}x_1
+x_2^2-2\widetilde{p}_{12}x_2+d_1
, 
x_1^2-2\widetilde{p}_{21}x_1
+x_2^2-2\widetilde{p}_{22}x_2+d_2
\right), 
\\
&=&
\left(
(x_1-\widetilde{p}_{11})^2
+(x_2-\widetilde{p}_{12})^2+d_1'
, 
(x_1-\widetilde{p}_{21})^2
+(x_2-\widetilde{p}_{22})^2+d_2'
\right), 
\end{eqnarray*}
where $d_1', d_2'$ are constant terms. 

Let $H_4:\R^2\to \R^2$ be the affine transformation defined by 
\begin{eqnarray*}
H_4(X_1,X_2)=(X_1-d_1', X_2-d_2'). 
\end{eqnarray*}
Then, we have 
\begin{eqnarray*}
{ } & { } &
H_4\circ H_3\circ H_2\circ H_1 \circ D_p(x_1, x_2)\\
&=&\left(
(x_1-\widetilde{p}_{11})^2
+(x_2-\widetilde{p}_{12})^2
, 
(x_1-\widetilde{p}_{21})^2
+(x_2-\widetilde{p}_{22})^2
\right)
\\
&=&D_{\widetilde{p}}(x_1, x_2). 
\end{eqnarray*}
\end{proof}
\begin{lemma}\label{mainlem}
Let $\gamma: N\to \R^2$ be an immersion 
satisfying $(\ast)$, 
where $N$ is a manifold of dimension $1$. 
Then, 
for any non-empty open set $U_1\times U_2$ of $N\times N$, 
there exists an element $(q_1,q_2)\in U_1\times U_2$ 
such that 
\begin{eqnarray*}
\det 
\left(
\begin{array}{cccc}
\displaystyle{\frac{d \gamma_1}{dt_1}(q_1)} & 
\gamma_1(q_2)-\gamma_1(q_1) 
\\
 [4.5mm] 
\displaystyle{\frac{d \gamma_2}{dt_1}(q_1)} & 
\gamma_2(q_2)-\gamma_2(q_1) 
\end{array}
\right)\not=0, 
\end{eqnarray*}
where $\gamma=(\gamma_1, \gamma_2)$ and $t_1$ 
is a local coordinate around $q_1$. 
\end{lemma}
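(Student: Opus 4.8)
The plan is to argue by contradiction, after first unpacking the geometric meaning of the determinant. Its first column is the tangent vector $\frac{d\gamma}{dt_1}(q_1)=\left(\frac{d\gamma_1}{dt_1}(q_1),\frac{d\gamma_2}{dt_1}(q_1)\right)$ and its second column is the chord vector $\overrightarrow{\gamma(q_1)\gamma(q_2)}=(\gamma_1(q_2)-\gamma_1(q_1),\gamma_2(q_2)-\gamma_2(q_1))$. Hence the determinant vanishes precisely when these two vectors are linearly dependent, that is, when the chord joining $\gamma(q_1)$ to $\gamma(q_2)$ is parallel to the tangent line of $\gamma$ at $q_1$. The assertion to be proved thus reduces to finding a pair $(q_1,q_2)\in U_1\times U_2$ whose chord is \emph{not} tangent to $\gamma$ at $q_1$.

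Next I would suppose, for contradiction, that the determinant vanishes for every $(q_1,q_2)\in U_1\times U_2$. The decisive step is to \emph{fix a single} point $q_1\in U_1$. For this fixed $q_1$, the vanishing condition says that for every $q_2\in U_2$ the vector $\gamma(q_2)-\gamma(q_1)$ is a scalar multiple of the one fixed tangent vector $\frac{d\gamma}{dt_1}(q_1)$. Consequently $\gamma(q_2)$ lies, for all $q_2\in U_2$, on the single straight line $L$ through $\gamma(q_1)$ in the direction $\frac{d\gamma}{dt_1}(q_1)$; in other words, the entire image $\gamma(U_2)$ is contained in one straight line.

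It then remains to show that having its image in a line forces the curvature of $\gamma$ to vanish on $U_2$. On any coordinate neighborhood contained in $U_2$ one may write $\gamma(t)=a+s(t)v$, where $v$ is a direction vector of $L$ and $s$ is a scalar function, so that both $\frac{d\gamma}{dt}$ and $\frac{d^2\gamma}{dt^2}$ are multiples of $v$; the determinant appearing in the numerator of $\kappa$ is therefore zero, giving $\kappa\equiv 0$ on $U_2$. This contradicts $(\ast)$, which provides a point of $U_2$ at which the curvature is nonzero, and the contradiction establishes the lemma.

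I do not expect a serious obstacle here; the subtle point deserving care is the order of the quantifiers. One must fix $q_1$ first and only then let $q_2$ range over $U_2$, so that all the chords emanate from the common point $\gamma(q_1)$ and thereby trap $\gamma(U_2)$ inside a \emph{single} line, rather than merely forcing each chord into its own tangent line. It is exactly this collapse to one line that allows the curvature computation to yield the contradiction with $(\ast)$.
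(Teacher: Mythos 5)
Your proof is correct and takes essentially the same route as the paper: argue by contradiction, fix $q_1$ first, deduce that $\gamma(U_2)$ is trapped in the single tangent line at $\gamma(q_1)$ (using the immersion hypothesis to know the tangent vector is nonzero), conclude $\kappa\equiv 0$ on $U_2$, and contradict $(\ast)$. The only cosmetic difference is that the paper differentiates the vanishing-determinant identity twice in $t_2$ (the implicit equation of the line) and finishes with a kernel argument, whereas you parametrize the line explicitly as $\gamma(t)=a+s(t)v$ before computing the curvature numerator --- the same computation in different clothing.
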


\begin{proof}
Let $U_1\times U_2$ be any non-empty open set of $N\times N$. 
Then, there exists a coordinate neighborhood $(U_1'\times U_2', (t_1,t_2))$ 
satisfying $U_1'\times U_2' \subset U_1\times U_2$. Fix $q_1'\in U_1'$. 

Now, suppose that for any point $t_2\in U_2'$, 
\begin{eqnarray}\label{eq:det0}
\det 
\left(
\begin{array}{cccc}
\displaystyle{\frac{d \gamma_1}{dt_1}(q_1')} & 
\gamma_1(t_2)-\gamma_1(q_1') 
\\
 [4.5mm] 
\displaystyle{\frac{d \gamma_2}{dt_1}(q_1')} & 
\gamma_2(t_2)-\gamma_2(q_1') 
\end{array}
\right)=0, 
\end{eqnarray}
where $\gamma=\left(\gamma_1, \gamma_2\right)$.
By (\ref{eq:det0}), we have 
\begin{eqnarray*}
\displaystyle{\frac{d \gamma_1}{dt_1}(q_1')}\left(\gamma_2(t_2)-\gamma_2(q_1')\right)
-
\displaystyle{\frac{d \gamma_2}{dt_1}(q_1')}\left(\gamma_1(t_2)-\gamma_1(q_1')\right)=0, 
\end{eqnarray*}
for any point $t_2\in U_2'$. 
Hence, we get 
\begin{eqnarray}
\displaystyle{\frac{d \gamma_1}{dt_1}(q_1')}
\displaystyle{\frac{d \gamma_2}{dt_2}(t_2)}
-
\displaystyle{\frac{d \gamma_2}{dt_1}(q_1')}
\displaystyle{\frac{d \gamma_1}{dt_2}(t_2)}&=&0, \label{eq:1}
\\
\displaystyle{\frac{d \gamma_1}{dt_1}(q_1')}
\displaystyle{\frac{d^2 \gamma_2}{dt_2^2}(t_2)}
-
\displaystyle{\frac{d \gamma_2}{dt_1}(q_1')}
\displaystyle{\frac{d^2 \gamma_1}{dt_2^2}(t_2)}&=&0,  \label{eq:2}
\end{eqnarray}
for any point $t_2\in U_2'$. 
By (\ref{eq:1}) and (\ref{eq:2}), we have 
\begin{eqnarray}\label{eq:3}
\left(
\begin{array}{cccc}
\displaystyle{\frac{d \gamma_2}{dt_2}(t_2)} & 
-\displaystyle{\frac{d \gamma_1}{dt_2}(t_2)} 
\\
 [4.5mm] 
\displaystyle{\frac{d ^2\gamma_2}{dt_2^2}(t_2)} & 
-\displaystyle{\frac{d ^2\gamma_1}{dt_2^2}(t_2)} 
\end{array}
\right)
\left(
\begin{array}{cc}
\displaystyle{\frac{d \gamma_1}{dt_1}(q_1')} 
\\
 [4.5mm] 
\displaystyle{\frac{d \gamma_2}{dt_1}(q_1')} 
\end{array}
\right)
=
\left(
\begin{array}{cc}
0
\\
0
\end{array}
\right), 
\end{eqnarray}
for any point $t_2\in U_2'$. 
Since $\gamma$ is an immersion, it follows that 
\begin{eqnarray}\label{eq:4}
\left(
\begin{array}{cc}
\displaystyle{\frac{d \gamma_1}{dt_1}(q_1')} 
\\
 [4.5mm] 
\displaystyle{\frac{d \gamma_2}{dt_1}(q_1')} 
\end{array}
\right)
\not=
\left(
\begin{array}{cc}
0
\\
0
\end{array}
\right). 
\end{eqnarray}
By (\ref{eq:3}) and (\ref{eq:4}), we have 
 \begin{eqnarray*}\label{eq:5}
 \det 
\left(
\begin{array}{cc}
\displaystyle{\frac{d \gamma_2}{dt_2}(t_2)} & 
-\displaystyle{\frac{d \gamma_1}{dt_2}(t_2)} 
\\
 [4.5mm] 
\displaystyle{\frac{d ^2\gamma_2}{dt_2^2}(t_2)} & 
-\displaystyle{\frac{d ^2\gamma_1}{dt_2^2}(t_2)} 
\end{array}
\right)
=0
\end{eqnarray*}
for any point $t_2\in U_2'$. 
This contradicts the hypothesis that $\gamma$ satisfies $(\ast)$. 
\end{proof}
\begin{remark}
{\rm 
It is clearly seen that 
\cref{mainlem} does not depend on the choice of a coordinate neighborhood 
containing a point $q_1$ of $N$. 
}
\end{remark}
\section{Proof of \cref{main}}\label{sec:main}
Let $O$ be any non-empty open set of $\gamma(N)\times \gamma(N)$. 
Then, there exist non-empty open sets $O_1$ and $O_2$ of $\gamma(N)$ 
satisfying $O_1\times O_2\subset O$. 
For the proof, 
it is sufficient to show that 
there exist points $p_1\in O_1$ and $p_2\in O_2$ 
such that $D_{p}\circ \gamma :N\to \R^2$ 
is an immersion with normal crossings, 
where $p=(p_1, p_2)$. 
Since $\gamma$ is continuous, 
there exist coordinate neighborhoods $(U_1, t_1)$ and $(U_2, t_2)$ of $N$ such that 
$\gamma(U_1)\subset O_1$ and $\gamma(U_2)\subset O_2$. 

Now, let $I_1$ (resp., $I_2$) be an open interval containing 
$0$ (resp., $1$) of $\R$, and let $\Phi :U_1\times U_2\times I_1\times I_2 \to \R^4$ be the mapping defined by 
\begin{eqnarray*}
\Phi(t_1,t_2,s_1,s_2)
&=&
\left(\gamma(t_1)+s_1\overrightarrow{\gamma(t_1)\gamma(t_2)}, 
\gamma(t_1)+s_2\overrightarrow{\gamma(t_1)\gamma(t_2)}\right)
\\
&=&
\left(
(1-s_1)\gamma_1(t_1)+s_1\gamma_1(t_2), 
(1-s_1)\gamma_2(t_1)+s_1\gamma_2(t_2), 
\right.
\\
&&\left.
(1-s_2)\gamma_1(t_1)+s_2\gamma_1(t_2), 
(1-s_2)\gamma_2(t_1)+s_2\gamma_2(t_2) 
\right), 
\end{eqnarray*}
where $\gamma=(\gamma_1, \gamma_2)$. 
Then, we get 
{\small 
\begin{eqnarray*}
{ } & { } &J\Phi_{(t_1,t_2,s_1,s_2)}
=
\left(
\begin{array}{cccc}
\displaystyle{(1-s_1)\frac{d \gamma_1}{dt_1}(t_1)} & 
\displaystyle{s_1\frac{d \gamma_1}{dt_2}(t_2)} &
\gamma_1(t_2)-\gamma_1(t_1) &
0 
\\
 [4.5mm] 
\displaystyle{(1-s_1)\frac{d \gamma_2}{dt_1}(t_1)} & 
\displaystyle{s_1\frac{d \gamma_2}{dt_2}(t_2)} &
\gamma_2(t_2)-\gamma_2(t_1) &
0 
\\
 [4.5mm] 
\displaystyle{(1-s_2)\frac{d \gamma_1}{dt_1}(t_1)} & 
\displaystyle{s_2\frac{d \gamma_1}{dt_2}(t_2)} &
0 &
\gamma_1(t_2)-\gamma_1(t_1) 
\\
 [4.5mm] 
\displaystyle{(1-s_2)\frac{d \gamma_2}{dt_1}(t_1)} & 
\displaystyle{s_2\frac{d \gamma_2}{dt_2}(t_2)} &
0 &
\gamma_2(t_2)-\gamma_2(t_1) 
\end{array}
\right).
\end{eqnarray*}
}
Set $s_1=0$ and $s_2=1$. Then, we have   
\begin{eqnarray*}
{ } & { } &J\Phi_{(t_1,t_2,0,1)}
=
\left(
\begin{array}{cccc}
\displaystyle{\frac{d \gamma_1}{dt_1}(t_1)} & 
0 &
\gamma_1(t_2)-\gamma_1(t_1) &
0 
\\
 [4.5mm] 
\displaystyle{\frac{d \gamma_2}{dt_1}(t_1)} & 
0 &
\gamma_2(t_2)-\gamma_2(t_1) &
0 
\\
 [4.5mm] 
0 & 
\displaystyle{\frac{d \gamma_1}{dt_2}(t_2)} &
0 &
\gamma_1(t_2)-\gamma_1(t_1) 
\\
 [4.5mm] 
 0 & 
\displaystyle{\frac{d \gamma_2}{dt_2}(t_2)} &
0 &
\gamma_2(t_2)-\gamma_2(t_1) 
\end{array}
\right). 
\end{eqnarray*}

Firstly, we will show that 
there exists an element $(\widetilde{t}_1, \widetilde{t}_2)\in U_1\times U_2$ 
such that $\det d\Phi_{(\widetilde{t}_1, \widetilde{t}_2, 0, 1)}\not =0$. 
Let $\varphi_1:U_1\times U_2\to \R$ and  $\varphi_2:U_1\times U_2\to \R$ be the functions 
defined by 
\begin{eqnarray*}
\varphi_1(t_1,t_2)=\det 
\left(
\begin{array}{cccc}
\label{eq:matrix1}
\displaystyle{\frac{d \gamma_1}{dt_1}(t_1)} & 
\gamma_1(t_2)-\gamma_1(t_1) 
\\
 [4.5mm] 
\displaystyle{\frac{d \gamma_2}{dt_1}(t_1)} & 
\gamma_2(t_2)-\gamma_2(t_1) 
\end{array}
\right), 
\\
\varphi_2(t_1,t_2)=
\det 
\left(
\begin{array}{cccc}
\displaystyle{\frac{d \gamma_1}{dt_2}(t_2)} & 
\gamma_1(t_2)-\gamma_1(t_1) 
\\
 [4.5mm] 
\displaystyle{\frac{d \gamma_2}{dt_2}(t_2)} & 
\gamma_2(t_2)-\gamma_2(t_1) 
\end{array}
\right). 
\end{eqnarray*}
Note that the function $\varphi_1$ (resp., $\varphi_2$) is 
defined by the entries of 
the $1$-th column vector and the $3$-th column vector of 
$J\Phi_{(t_1,t_2,0,1)}$ (resp., the $2$-th column vector and the $4$-th column vector of $J\Phi_{(t_1,t_2,0,1)}$). 
In order to show that there exists an element $(\widetilde{t}_1, \widetilde{t}_2)\in U_1\times U_2$ 
such that $\det d\Phi_{(\widetilde{t}_1, \widetilde{t}_2, 0, 1)}\not =0$, 
it is sufficient to show that 
there exists an element $(\widetilde{t}_1, \widetilde{t}_2)\in U_1\times U_2$ 
satisfying $\varphi_1(\widetilde{t}_1, \widetilde{t}_2)\not =0$ and 
$\varphi_2(\widetilde{t}_1, \widetilde{t}_2)\not =0$. 
From \cref{mainlem}, 
there exists $(t_1',t_2')\in U_1\times U_2$ such that $\varphi_1(t_1',t_2')\not=0$. 
Since the function $\varphi_1$ is continuous, 
there exists an open neighborhood $U_1'\times U_2'$ $(\subset U_1\times U_2)$ 
of $(t_1',t_2')$ satisfying 
$\varphi_1(t_1,t_2)\not =0$ for any $(t_1,t_2)\in U_1'\times U_2'$. 
Moreover, from \cref{mainlem}, 
there exists $(\widetilde{t}_1, \widetilde{t}_2)\in U_1'\times U_2'$ 
such that $\varphi_2(\widetilde{t}_1, \widetilde{t}_2)\not=0$. 
Namely, 
there exists an element $(\widetilde{t}_1, \widetilde{t}_2)\in U_1\times U_2$ 
such that $\det d\Phi_{(\widetilde{t}_1, \widetilde{t}_2, 0, 1)}\not =0$. 

Now, from the inverse function theorem, 
there exists an open neighborhood $V$ of 
$(\widetilde{t}_1, \widetilde{t}_2,0,1)\in U_1\times U_2\times I_1\times I_2$ 
such that $\Phi : V\to \Phi(V)$ is a diffeomorphism. 
Let $\Sigma \subset \R^2\times \R^2$ be the set 
consisting of points $p=(p_1,p_2)\in \R^4$ 
satisfying $D_p\circ \gamma :N\to \R^2$ is not 
an immersion with normal crossings. 
Note that from \cref{semimain}, 
the set $\R^4-\Sigma$ is dense in $\R^4$.
Set
\begin{eqnarray*} 
\Delta=\left \{(y_1,y_2) \in \R^2\times \R^2 \mid y_1=y_2\right \}.
\end{eqnarray*} 
Since $\Phi(V)$ is an open set of $\R^4$, 
the set $\R^4-\Sigma$ is dense in $\R^4$ and 
the set $\Delta$ is a proper algebraic set of $\R^4$, 
there exists an element $p'=(p_1',p_2')\in \Phi(V)-\Sigma \cup \Delta$. 
By $p'\not\in \Sigma$, 
the composition $D_{p'}\circ \gamma : N\to \R^2$ is an immersion with normal crossings. 
Set $(t_1', t_2', s_1', s_2')=
(\Phi |_{V})^{-1}(p_1',p_2')$. Then, we have 
\begin{eqnarray*} 
p_1'&=&\gamma(t_1')+s_1'\overrightarrow{\gamma(t_1')
\gamma(t_2')}, \\
p_2'&=&\gamma(t_1')+s_2'\overrightarrow{\gamma(t_1')
\gamma(t_2')}. 
\end{eqnarray*} 
By $p_1'\not=p_2'$, we get $\gamma(t_1')\not =\gamma(t_2')$. 
Let $L$ be the straight line defined by 
\begin{eqnarray*} 
L=\left \{\gamma(t_1')+s\overrightarrow{\gamma(t_1')
\gamma(t_2')} \ \biggr | \ s \in \R \right \}. 
\end{eqnarray*} 
Set $\widetilde{p}_1=\gamma(t_1')$ and 
$\widetilde{p}_2=\gamma(t_2')$. 
Then, it is clearly seen that $\widetilde{p}_1\in O_1$ and 
$\widetilde{p}_2\in O_2$. 
Since $p_1', p_2'\in L$ $(p_1'\not=p_2')$ and 
$\widetilde{p}_1, \widetilde{p}_2\in L$ $(\widetilde{p}_1\not=\widetilde{p}_2)$, 
from \cref{mainpro}, 
there exists an affine transformation $H:\R^2 \to \R^2$ such that
\begin{eqnarray*}
H\circ D_{p'}=D_{\widetilde{p}},
\end{eqnarray*}
where $\widetilde{p}=(\widetilde{p}_1,\widetilde{p}_2)$. 
Since $D_{p'}\circ \gamma : N\to \R^2$ is an immersion with normal crossings, 
$D_{\widetilde{p}}\circ \gamma : N\to \R^2$ is also an immersion with normal crossings.  
\hfill\qed
\section*{Acknowledgements}
The author is grateful to Takashi Nishimura 
for his kind comments. 
The author is supported by JSPS KAKENHI Grant Number 16J06911. 

\end{document}